\newtheorem{theorem}{Theorem}
\newtheorem{proposition}[theorem]{Proposition}
\newtheorem{lemma}[theorem]{Lemma}
\newtheorem{corollary}[theorem]{Corollary}
\newcommand{\si}{\sigma}
\newcommand{\ii}[1]{\textit{#1}}
\newcommand{\al}{\alpha}
\newcommand{\set}[1]{\{#1\}}
\newcommand{\sun}{{\rm SUN}}
\newcommand{\scn}{{\rm SCN}}
\newcommand{\pre}[1]{{#1_{\rm pre}}}
\newcommand{\floor}[1]{\left\lfloor{#1}\right\rfloor}
\long\def\symbolfootnote[#1]#2{\begingroup%
\def\thefootnote{\fnsymbol{footnote}}\footnote[#1]{#2}\endgroup}
\begin{document}

\title{Constructions and nonexistence results for suitable sets of permutations}
\author{Justin H.C. Chan \and Jonathan Jedwab}
\date{March 23, 2016 (revised November 30, 2016)}
\maketitle

\symbolfootnote[0]{
Department of Mathematics, Simon Fraser University, 8888 University Drive, Burnaby BC V5A 1S6, Canada.
\par
J.~Jedwab is supported by NSERC.
\par
Email: {\tt jhc34@sfu.ca}, {\tt jed@sfu.ca}
\par
2010 Mathematics Subject Classification 05A05, 05B20
\par
}

\begin{abstract}
A set of $N$ permutations of $\set{1,2,\dots, v}$ is \ii{$(N,v,t)$-suitable} if each symbol precedes each subset of $t-1$ others in at least one permutation. The central problems are to determine the smallest $N$ for which such a set exists for given $v$ and $t$, and to determine the largest $v$ for which such a set exists for given $N$ and~$t$. These extremal problems were the subject of classical studies by Dushnik in 1950 and Spencer in 1971. 
We give examples of suitable sets of permutations for new parameter triples $(N,v,t)$.  We relate certain suitable sets of permutations with parameter $t$ to others with parameter $t+1$, thereby showing that one of the two infinite families recently presented by Colbourn can be constructed directly from the other.  We prove an exact nonexistence result for suitable sets of permutations using elementary combinatorial arguments. We then establish an asymptotic nonexistence result using Ramsey's theorem.
\end{abstract}

{\bf Keywords}: construction, extremal problem, nonexistence, Ramsey's theorem, suitable array, suitable core

%%%INTRODUCTION

\section{Introduction}
\label{sec:intro}
A set of $N$ permutations of $[v] = \set{1,2,\dots,v}$ is \ii{$(N,v,t)$-suitable} if each symbol precedes each subset of $t-1$ others in at least one permutation; necessarily we must have $t \le \min(v,N)$. We represent such a set as an $N\times v$ array called an \ii{$(N,v,t)$-suitable array}. For example, $\set{2413,3421,1423}$ is a $(3,4,3)$-suitable set of permutations and its corresponding array is
$$
\begin{bmatrix}
2&4&1&3 \\
3&4&2&1 \\
1&4&2&3
\end{bmatrix} .
$$
Given an $(N,v,t)$-suitable array, we can readily form an $(N+1,v,t)$-suitable array by adding an arbitrary extra row, and an $(N,v-1,t)$-suitable array by removing all occurrences of a single symbol (and left-justifying the remaining symbols). This simple observation motivates two fundamental extremal problems:

\begin{enumerate}
\item[(P1)]
Given $v$ and $t$, what is the smallest $N$ for which an $(N,v,t)$-suitable array exists? 
We denote this as $N(v,t)$ (following~\cite{dushnik}), which is well-defined: the $v \times v$ array whose initial elements are $1, 2, \dots, v$ is $(v,v,t)$-suitable for each $t \le v$ and so $N(v,t) \le v$.

\item[(P2)]
Given $N$ and $t$, what is the largest $v$ for which an $(N,v,t)$-suitable array exists? 
We denote this as $\sun(t,N)$ (following~\cite{colbourn}). It is 
well-defined for $t\ge3$: we then have $\sun(t,N)\le 2^{2^N}$ \cite{spencer}, and $\sun(t,N) \ge N$ by reference to the $(v,v,t)$-suitable example just described.
But $\sun(2,N)$ is not well-defined for $N \ge 2$, because the $N \times v$ array whose first two rows are
$\begin{bmatrix} 1 & 2 & \dots & v-1 & v \end{bmatrix}$
and
$\begin{bmatrix} v & v-1 & \dots & 2 & 1 \end{bmatrix}$
is $(N,v,2)$-suitable for arbitrarily large~$v$.
\end{enumerate}

In 1950, Dushnik \cite{dushnik} introduced problem (P1), showing by combinatorial arguments that $N(v,t) = v-j+1$ for each $j$ satisfying $2\le j\le \sqrt{v}$ and for each~$t$ satisfying
$$
\floor{\frac{v}{j}} +j-1 \le t < \floor{\frac{v}{j-1}} +j-2.
$$
This determines $N(v,t)$ exactly for all $t$ in the range
$$
\floor{\frac{v}{\floor{\sqrt{v}}}} + \floor{\sqrt{v}} - 1 \le t < v.
$$
In particular, when the lower bound is attained (arising by taking $j = \floor{\sqrt{v}}$), both $v$ and $N(v,t)$ grow as~$\Theta(t^2)$.

Spencer \cite{spencer} continued the study of problem (P1) in 1971. 
Under the condition that $t \ge 3$ is fixed, he used a theorem due to Erd\H{o}s and Szekeres \cite{erdos-szekeres} to show that $N(v,t) \ge \log_2 \log_2 v$ (or equivalently $\sun(t,N)\le 2^{2^N}$), and Sperner's lemma \cite{sperner} and the Erd\H{o}s-Ko-Rado theorem \cite{erdos-ko-rado} to show that $N = O(\log_2 \log_2 v)$ as $v \to \infty$. 

F{\"u}redi and Kahn \cite{furedi-kahn} studied problem (P1) in 1986, using probabilistic methods to show that $N(v,t) \le t^2(1+\log(v/t))$ for all $t$ and~$N$.
Kierstead \cite{kierstead} refined this result when $t$ is approximately~$\log{v}$.

In a recent paper, Colbourn \cite{colbourn} studied problem~(P2) by linking suitable sets of permutations to a variety of combinatorial structures explicitly.  
He showed that $\sun(t,N) = \Theta(2^{2^N})$ for fixed $t \ge 3$, extending Spencer's result, by using a connection with binary covering arrays~\cite{lawrence}.
He examined the case when $v$ and $N$ both grow as $t^2$, by making a connection with Golomb rulers and their variants \cite{drakakis}, \cite{erdos-sidon}, \cite{erdos-turan}.
He found results for the case when $t$ is $O(\log N)$ by making a connection with Hadamard matrices~\cite{horadam} and Paley matrices~\cite{paley}.

Our principal interest in this paper is Colbourn's study of problem (P2) for values of $t$ that are intermediate between those described above.
His results (and ours) are conveniently described in terms of the quantity 
\begin{equation}
\scn(t,N): = \sun(t,N)-N,
\label{scn-defn}
\end{equation}
whose motivation will be described in Section~\ref{sec:cores}. 
Colbourn \cite[Section~1]{colbourn} derives the value of $\scn(2s,N)$ for all $N < s(s+1)$, and the value of $\scn(2s+1,N)$ for all $N < (s+1)^2$, and gives the following lower bounds for the next largest value of~$N$.
\begin{theorem}[Colbourn] 
\label{thm:colbourn}
\mbox{}
\begin{enumerate}
\item[(i)]  {\rm\cite[Lemma 1.3]{colbourn}}
%An $(s(s+1),s+2,2s)$-suitable core exists for all $s\ge 1$.
$\scn(2s,s(s+1)) \ge s+2$ for all $s \ge 2$.

\item[(ii)]  {\rm\cite[Lemma 1.4]{colbourn}}
%An $((s+1)^2,s+2,2s+1)$-suitable core exists for all $s\ge 1$.
$\scn(2s+1,(s+1)^2) \ge s+2$ for all $s \ge 1$.
\end{enumerate}
\end{theorem}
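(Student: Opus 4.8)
The two inequalities are equivalent, via $\scn(t,N)=\sun(t,N)-N$, to the assertions that an $(s(s+1),\,s(s+1)+s+2,\,2s)$-suitable array exists (part~(i)) and that a $((s+1)^2,\,(s+1)^2+s+2,\,2s+1)$-suitable array exists (part~(ii)). Both are constructive claims. Since the two target triples have the same excess $v-N=s+2$ and since $(s+1)^2=s(s+1)+(s+1)$ and $(s+1)^2+s+2=\bigl(s(s+1)+s+2\bigr)+(s+1)$, the natural plan is to prove one family by an explicit construction and to obtain the other from it by adjoining (or deleting) exactly $s+1$ rows and $s+1$ symbols.

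For the direct construction --- say of part~(i) --- I would start from an extremal $2s$-suitable array lying just below the threshold $N=s(s+1)$, where the exact value of $\scn(2s,N)$ is already known (Colbourn determines it for all $N<s(s+1)$, as noted above); such an array is highly structured. To it I would adjoin a bounded number of further rows together with a few new symbols, the new symbols being placed at the front of the appended rows and chosen precisely so as to cover every pair (distinguished symbol, $(2s-1)$-subset) that involves a new symbol. The suitability check then splits according to whether the distinguished symbol and the members of the $(2s-1)$-subset are ``old'' (inherited) or ``new'' (adjoined): the old/old case is inherited for free, the new/anything cases are handled by the front placement, and the one delicate case is an old symbol against a $(2s-1)$-subset mixing old and new symbols, where an appended row must put that old symbol ahead of all the new symbols while still getting ahead of enough old ones. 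The crux is to arrange the appended rows so that \emph{every} old symbol simultaneously precedes \emph{every} admissible mixture of new symbols; a counting comparison --- how many mixtures must be covered versus how much room a single appended permutation provides --- should show both that a constant number of extra rows suffices and why the excess is forced to be exactly $s+2$.

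For part~(ii) I would adjoin to the array of part~(i) exactly $s+1$ new rows and $s+1$ new symbols, again placing the new symbols at the front of the new rows; by the arithmetic above the excess is preserved, so the construction is forced onto the claimed triple. One then reverifies suitability against $2s$-subsets rather than $(2s-1)$-subsets; the only genuinely new obligations are those involving at least one of the $s+1$ new symbols, and the point of taking exactly $s+1$ new rows is that they are just enough to put every old symbol ahead of every $2s$-subset that contains new symbols --- again a short counting check. Read backwards, this says that deleting those $s+1$ symbols and $s+1$ rows recovers part~(i); since removing a symbol automatically preserves suitability, the deletion direction is the cleaner packaging of ``one family from the other,'' the entire content then being the single claim that those $s+1$ rows are dispensable once the requirement is weakened from $(2s+1)$-suitability to $2s$-suitability.

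The main obstacle throughout is not locating the arrays but verifying suitability, and within that the mixed ``old symbol versus old-and-new subset'' cases; the argument should be entirely elementary but combinatorially intricate, and none of the Ramsey-type machinery used elsewhere in the paper enters here. As a sanity check, the smallest instance of part~(ii), namely $s=1$, asserts only that a $(4,7,3)$-suitable array exists, which can be exhibited directly.
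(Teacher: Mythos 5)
There is a genuine gap, in both halves. For part (i), the entire content of the statement is an explicit construction (Colbourn exhibits an $(s(s+1),s+2,2s)$-suitable core), and your proposal never produces one: the plan of starting from an extremal array just below the threshold and adjoining ``a few'' rows and symbols is left at the level of ``a counting comparison should show\dots''. It is also implausible as stated: by Proposition~\ref{smallscn}~(iii), for $(s-1)(s+2)\le N<s(s+1)$ one has $\scn(2s,N)=s-1$, so passing from $N=s(s+1)-1$ to $N=s(s+1)$ must jump the core size from $s-1$ to $s+2$; this jump is exactly the surprising content of the construction, and no augmentation argument is supplied that achieves it. Without the explicit array (or core) and its verification, part (i) is not proved.

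For part (ii), your derivation from part (i) is the right idea (it is essentially the paper's Theorem~\ref{thm:tt+1} and Corollary~\ref{cor_extension}: add $s+1$ rows to pass from $t=2s$ to $t=2s+1$), but your verification contains a false step. You claim that after adjoining the new rows ``the only genuinely new obligations are those involving at least one of the $s+1$ new symbols'' and that the old/old case ``is inherited for free''. It is not: when $t$ increases from $2s$ to $2s+1$, an old symbol must now precede every $2s$-subset of \emph{old} symbols, which $2s$-suitability (only $(2s-1)$-subsets) does not give. The correct mechanism, as in Theorem~\ref{thm:tt+1}, works at the level of cores via the quantitative condition of Proposition~\ref{core} (each symbol precedes each $s$-subset in at least $t-s$ rows, giving slack $t+1-v+|T|$), uses the pigeonhole hypothesis $N>v(t+1-v)$ to find a symbol $v$ starting at least $t+2-v$ rows, and appends the $v-1$ rows $1\,v,\,2\,v,\dots,(v-1)\,v$; the case analysis there is exactly what replaces your ``inherited for free'' claim. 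So even granting part (i), your sketch of (ii) does not close, and part (i) itself is missing its construction.
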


Parts (i) and (ii) of Theorem~\ref{thm:colbourn} are proved in \cite{colbourn} by two explicit constructions of families of suitable arrays.
In Section~\ref{sec:tt+1} we will prove a general relation between certain suitable arrays with parameter $t$ and others with parameter~$t+1$. One consequence is that part (ii) of Theorem~\ref{thm:colbourn} can be obtained directly from part~(i).

Colbourn \cite{colbourn} states (without proof) that the inequality of Theorem~\ref{thm:colbourn}~(i) is actually an equality. We demonstrate by example in Section~\ref{sec:cores} the new results that $\scn(3,4) \ge 8$ and $\scn(5,9) \ge 5$, so that equality does not hold in Theorem~\ref{thm:colbourn}~(ii) for the cases $s=1$ and $s=2$. This appears to suggest that the inequality of Theorem~\ref{thm:colbourn}~(ii) is not sharp in general, but we shall show in Section~\ref{sec:oddcase} using elementary combinatorial arguments that this is not the case:

\begin{theorem}
\label{thm:odd_s}
%There does not exist an $((s+1)^2,s+3,2s+1)$-suitable core for $s\ge 3$.
$\scn(2s+1,(s+1)^2) = s+2$ for all $s\ge 3$.
\end{theorem}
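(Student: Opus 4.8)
Colbourn's Theorem~\ref{thm:colbourn}~(ii) already supplies the lower bound $\scn(2s+1,(s+1)^2)\ge s+2$, so the content of the theorem is the matching upper bound: there is no $((s+1)^2,\,(s+1)^2+s+3,\,2s+1)$-suitable array. Write $N=(s+1)^2$, $t=2s+1$, $r=s+3$, and suppose for contradiction that such an array exists. The plan is to adopt the suitable-core viewpoint of Section~\ref{sec:cores}. Since the array has $N+r$ symbols but only $N$ rows, at least $r$ symbols are never the first entry of a row; fix a set $Z$ of exactly $r$ such symbols. For $z\in Z$ let $B_\pi(z)$ denote the set of symbols preceding $z$ in row $\pi$ (nonempty, since $z$ is never first), and for $S\subseteq Z\setminus\{z\}$ let $\mathcal R(z,S)$ be the set of rows in which $z$ precedes every symbol of $S$.

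The first step is to extract two quantitative facts. Because $|S|\le r-1\le t-1$, padding $S$ to a $(t-1)$-set and invoking suitability shows that the nonempty sets $\{B_\pi(z):\pi\in\mathcal R(z,S)\}$ admit no transversal of size $\le t-1-|S|$; hence (i) $|\mathcal R(z,S)|\ge t-|S|$, and (ii) if equality holds then these $t-|S|$ sets are pairwise disjoint. Applying (i) with $S=Z\setminus\{z\}$ gives $f(z):=|\mathcal R(z,Z\setminus\{z\})|\ge t-r+1=s-1$, where $f(z)$ counts the rows in which $z$ is the first core symbol; and as $\sum_{z\in Z}f(z)=N$ while $\sum_{z\in Z}(s-1)=(s+3)(s-1)=N-4$, at least $r-4=s-1$ of the core symbols satisfy $f(z)=s-1$ exactly, so for each of them the $s-1$ predecessor sets over its ``first-core'' rows are pairwise disjoint --- a genuine restriction precisely because $s-1\ge2$ once $s\ge3$ (for $s\le2$ these sets are at most one in number and the constraint is empty, which is consistent with $\scn$ being strictly larger there; cf.\ $\scn(3,4)\ge8$ and $\scn(5,9)\ge5$).

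From here I would descend through $|S|=r-2,r-3,\dots$: applying (i) with $S=Z\setminus\{z,z'\}$ forces each ``tight'' $z$ (those with $f(z)=s-1$) to lie second among the core symbols, behind every other core symbol, in at least one row, hence to be second-core in at least $r-1=s+2$ rows; applying (ii) at the same level then forces further disjointness whenever this count equals $s+2$. Combining the resulting lower bounds on the ``second-core'' counts (which again sum to $N$ over $z\in Z$), the disjointness obtained at the top level, and --- using the bijection between rows and their first symbols when exactly $r$ symbols are never first --- the limited ways in which the predecessor sets of different core symbols can overlap, one should be driven to a configuration of the core symbols and of the remaining symbols inside the rows that cannot be realised; verifying this unsatisfiability is the technical heart of the argument. (The case in which more than $r$ symbols are never first is handled by the same scheme.)

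The reason this is delicate --- and where I expect the real work to lie --- is that $N=(s+1)^2$ is exactly $\max_m m(t-m+1)$ for $t=2s+1$, so the naive first-moment inequality $N\ge r(t-r+1)$, and indeed each inequality obtained from (i) level by level, holds with room to spare; only the disjointness refinements in (ii), together with a careful accounting of how predecessor sets may overlap, convert an $O(1)$ surplus into a strict contradiction, and this accounting must be arranged so as to close exactly when $s\ge3$.
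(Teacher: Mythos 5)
Your setup is sound as far as it goes: the reduction to showing that no $((s+1)^2,s+3,2s+1)$-suitable core exists, your facts (i) and (ii) (which are exactly Proposition~\ref{core} together with the disjointness idea behind Proposition~\ref{smallscn}(i)), and the count showing at least $r-4=s-1$ core symbols are ``tight'' with $f(z)=s-1$ all match the opening moves of the paper's argument (Lemma~\ref{properties}(i),(ii) and the surplus-of-four count). But the proof stops exactly where the theorem lives. You write that ``verifying this unsatisfiability is the technical heart of the argument'' and that this is ``where I expect the real work to lie''---that work is not done, and nothing in the sketch shows that the level-by-level inequalities plus disjointness-at-equality actually become contradictory at $s\ge 3$. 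Since, as you yourself note, every counting inequality holds with slack, the contradiction must come from specific structural arguments, and none are supplied.

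For comparison, the paper closes the gap with two ingredients absent from your plan. First, it exploits the exact equidistribution in an $((s+1)^2,s+1,2s+1)$-core (each of the $s+1$ symbols starts exactly $s+1$ rows) together with Lemma~\ref{reduction}: deleting up to two symbols from $C$ must again give such a core, so no symbol of $C$ can start more than $s+1$ rows even after removing zero, one or two other symbols. This single fact both restricts the distribution of the four surplus first rows to three explicit cases and, in the hardest case, bounds how often any symbol can occur in second position among the surplus rows (yielding the contradiction $s+3\le 4$). Second, in the remaining nontrivial case it uses Lemma~\ref{properties}(iii) (rows starting $i\,j\,k$ or $j\,i\,k$) to pin down the third positions after pairs of tight symbols, and then applies Proposition~\ref{core} to $\pre{C}(s+3,\{1,2,3\})$ for three tight symbols $1,2,3$---this is also precisely where the hypothesis $s\ge 3$ enters, matching the failure at $s=2$ exhibited by Figure~\ref{955sc}. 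Your proposal gestures at ``descending through $|S|=r-2,r-3,\dots$'' but gives no analogue of either ingredient, so as written it is a plausible plan rather than a proof.
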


Theorem~\ref{thm:odd_s} suggests a more delicate question: for $t = 2s+1$, can we increase the maximum possible value of $v$ from $s+2$ by incrementing the value of the parameter $N=(s+1)^2$ by $1$;
in other words, is $\scn(2s+1,(s+1)^2+1) > s+2$ for infinitely many $s$? 
Small examples appear hopeful: we demonstrate by example in Section~\ref{sec:cores} the new results that $\scn(7,17) \ge 6$ and $\scn(9,26) \ge 7$, corresponding to the cases $s=3$ and $s=4$. However, in Section~\ref{sec:ramsey} we show that such an increase is possible for only finitely many~$s$ (all of which can be shown to be at most~$14$). In fact, in Section \ref{sec:ramsey} we use Ramsey's theorem \cite{ramsey} (to our knowledge, a new tool in the study of suitable arrays) to show the surprising result that, for the parameters of both parts of Theorem~\ref{thm:colbourn}, the value of $N$ can be increased any fixed amount $\ell$ and yet $v$ can be increased from $s+2$ for only finitely many $s$:

\begin{theorem}
\label{thm:plusell}
\mbox{}
\begin{enumerate}

\item[(i)] 
For each nonnegative integer $\ell$, there exists $s_0$ (depending on $\ell$) such that
$\scn(2s,s(s+1)+\ell) = s+2$ for all $s \ge s_0$.

\item[(ii)]
For each nonnegative integer $\ell$, there exists $s_0$ (depending on $\ell$) such that
$\scn(2s+1,(s+1)^2+\ell) = s+2$ for all $s \ge s_0$.

\end{enumerate}
\end{theorem}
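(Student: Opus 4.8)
The plan is to treat $t=2s$ (part~(i)) in detail and to deduce part~(ii) from it via the correspondence between $t$-suitable and $(t+1)$-suitable arrays established in Section~\ref{sec:tt+1}, just as Theorem~\ref{thm:colbourn}(ii) is deduced there from Theorem~\ref{thm:colbourn}(i); this changes $s_0$ only by an absolute constant. The lower bound in part~(i) is routine. Adjoining a new symbol as the leading entry of a brand-new row, and as the trailing entry of every existing row, turns an $(N,v,t)$-suitable array into an $(N+1,v+1,t)$-suitable one whenever $t\ge 3$: the new symbol precedes everything in the new row, while an old symbol that preceded a $(t-2)$-subset $S'$ in some old row now precedes $S'$ together with the new symbol in that same row. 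Hence $\sun(t,N+1)\ge\sun(t,N)+1$, so $N\mapsto\scn(t,N)$ is non-decreasing, and Theorem~\ref{thm:colbourn}(i) gives $\scn(2s,s(s+1)+\ell)\ge\scn(2s,s(s+1))\ge s+2$. So it remains to prove the matching upper bound $\scn(2s,s(s+1)+\ell)\le s+2$ for all large $s$; equivalently, that no $(s(s+1)+\ell,\,v,\,2s)$-suitable array has $v=s^2+2s+3+\ell$ once $s\ge s_0(\ell)$.

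I would prove this by contradiction, in two stages. The first stage is structural: using the theory of suitable cores from Section~\ref{sec:cores} together with the counting arguments behind the exact value $\scn(2s,s(s+1))=s+2$ (the equality case of Theorem~\ref{thm:colbourn}(i)), I would show that any such array, after reordering its rows and relabelling its symbols (which preserve suitability), decomposes into roughly $s$ ``blocks'' of symbols, a set of ``exceptional'' rows, and a set of ``surplus'' symbols, the latter two of size bounded as a function of $\ell$ only; and that each block, viewed together with the exceptional rows, is of one of only finitely many ``types'', again finitely many as a function of $\ell$. The content of this stage is that taking $v$ one larger than the extremal value $s^2+2s+2+\ell$ forces all but a part of size $O_\ell(1)$ of the array into the rigid pattern of Dushnik's extremal construction~\cite{dushnik}.

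The second stage applies Ramsey's theorem~\cite{ramsey}. I would colour each pair of blocks by the (boundedly many) ways the symbols of the two blocks are interleaved across the rows, so that, once $s$ is large enough that the number of blocks exceeds the relevant Ramsey number --- a number depending only on $\ell$, and for $\ell=1$ at most the classical value $R(3,5)=14$ --- there is a monochromatic family of blocks of a prescribed fixed size. Such a family is too uniform: its symbols are so similarly ordered relative to one another and to the exceptional rows that one of them must lie in the last $2s-1$ positions of every row, hence cannot precede some $(2s-1)$-subset of the other symbols, contradicting suitability. This bounds $s$ in terms of $\ell$, which is Theorem~\ref{thm:plusell}(i), and Section~\ref{sec:tt+1} then yields part~(ii).

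The crux, and the step I expect to be hardest, is the interface between the two stages: choosing precisely which data define a block's ``type'' and a pair of blocks' ``colour'' so that simultaneously (a) only $O_\ell(1)$ colours occur, so that a genuine and, for small $\ell$, explicit Ramsey number governs the threshold $s_0(\ell)$, and (b) a monochromatic family of the prescribed size really does contain an unsuitable symbol. Verifying (b) requires re-entering the extremal configuration behind $\scn(2s,s(s+1))=s+2$ and checking that a surplus symbol can be served by the $O_\ell(1)$ exceptional rows only when the number of blocks with which it conflicts stays below the family size that Ramsey's theorem supplies.
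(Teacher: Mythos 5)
Your reduction of part~(ii) to part~(i) goes the wrong way. Theorem~\ref{thm:tt+1} turns an $(N,v,t)$-suitable core into an $(N+v-1,v,t+1)$-suitable core, i.e.\ it transfers \emph{existence} upward from $t$ to $t+1$; for the nonexistence half of Theorem~\ref{thm:plusell} (the only nontrivial half) its contrapositive therefore yields (ii)~$\Rightarrow$~(i) --- this is exactly Corollary~\ref{splus3_parity} --- and not (i)~$\Rightarrow$~(ii) as you claim. Your analogy with deducing Theorem~\ref{thm:colbourn}(ii) from Theorem~\ref{thm:colbourn}(i) fails precisely because that deduction concerns lower bounds. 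Nor can you pass from (i) to (ii) by monotonicity with ``$s_0$ changed only by an absolute constant'': at the same $s$ the two values of $N$ differ by $(s+1)^2-s(s+1)=s+1$, so bounding $\scn(2s+1,(s+1)^2+\ell)$ via $\scn(2s+1,N)\le\scn(2s,N)$ would need part~(i) with offset $\ell'=s+1+\ell$ growing with $s$, which part~(i) (whose $s_0$ depends on $\ell'$) does not supply and cannot supply uniformly, since $\scn(2s,s(s+1)+2s+2)=\scn(2s,(s+1)(s+2))\ge s+3$ by Theorem~\ref{thm:colbourn}(i) applied at $s+1$ together with monotonicity in $t$. So even if your even-$t$ argument were complete, part~(ii) would remain unproved; the paper proves the odd case~(ii) directly and derives~(i) from it.

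Moreover, the argument you offer for the upper bound is a plan rather than a proof. The ``blocks'', their ``types'', and the coloring of pairs of blocks by ``interleaving patterns'' are never defined, and you yourself flag the decisive step --- choosing these data so that only boundedly many colors occur and a monochromatic family forces a violation of suitability --- as unresolved; the accompanying structural claim that all but an $O_\ell(1)$ portion of the array is forced into Dushnik's extremal pattern is likewise asserted, not established, and nothing of that strength is needed. For comparison, the paper's Ramsey step rests on much lighter, fully explicit structure: by Lemma~\ref{properties} every symbol starts at least $s-1$ rows and at most $\ell+4$ symbols start more; to each symbol $i$ starting exactly $s-1$ rows one attaches a $3$-set $f(i)$ of symbols never appearing second after $i$; pigeonhole produces many symbols sharing a common value $\{k_1,k_2,k_3\}$; Lemma~\ref{counting2} extracts a set $B$ such that for distinct $b_x,b_y\in B$ there is exactly one row starting $b_x\,b_y$; edges of the complete graph on $B$ are $3$-colored by which $k_i$ precedes the other two in neither of the two relevant rows; and Proposition~\ref{core} forbids a monochromatic $K_{r_i+1}$, where $s-1+r_i$ is the number of rows started by $k_i$, contradicting Ramsey's theorem once $|B|\ge R(r_1+1,r_2+1,r_3+1)$. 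To salvage your outline you would have to supply comparably explicit definitions and verifications (for the odd case, or for the even case together with a genuinely new nonexistence transfer from $t$ to $t+1$), none of which is present.
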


The remainder of the paper is structured in the following way.
In Section~\ref{sec:cores} we introduce suitable cores as equivalent objects to suitable arrays. In Section~\ref{sec:prelim} we establish some preliminary results to be used in later nonexistence proofs. In Section~\ref{sec:tt+1} we establish a link between suitable cores with parameters $t$ and $t+1$. In Sections~\ref{sec:oddcase} and~\ref{sec:ramsey} we prove the central results of the paper, Theorems~\ref{thm:odd_s} and~\ref{thm:plusell}, respectively.

%%% SUITABLE CORES

\section{Suitable cores}
\label{sec:cores}

In this section, we recast the problem of finding suitable arrays as the equivalent problem of finding ``suitable cores'', as defined in \cite{colbourn} based on the proof of Theorem I of~\cite{dushnik}. 
As previously noted, the $N \times N$ array whose initial elements are $1,2,\dots,N$ is $(N,N,t)$-suitable for each $t \le N$ and so we may restrict attention to $(N,v,t)$-suitable arrays having $v \ge N$.
We begin with a straightforward lemma.

\begin{lemma}
\label{pushelt}
Let $A$ be an $(N,v,t)$-suitable array, and let $\al$ occur in the leftmost position of some row of $A$. Then the array $B$ obtained by moving an occurrence of $\al$ in a different row of $A$ to the rightmost position of its row results in another $(N,v,t)$-suitable array.
\end{lemma}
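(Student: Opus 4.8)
The plan is to verify directly that $B$ still satisfies the defining suitability condition: every symbol precedes every $(t-1)$-subset of the other symbols in at least one row of $B$. Since the only change from $A$ to $B$ is that one occurrence of $\al$ (in some row $r$, which is not the row witnessing $\al$ in leftmost position) has been moved to the rightmost position of row $r$, the relative order of every pair of symbols \emph{neither of which is $\al$} is unchanged in row $r$, and all other rows are untouched. So the only ``precedings'' that could be lost are those of the form ``$\al$ precedes $S$'' for some $(t-1)$-set $S$, or ``$\beta$ precedes $S$'' where $\al \in S$. I would handle these two cases separately.

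First, consider a symbol $\beta \neq \al$ and a $(t-1)$-subset $S$ of $[v] \setminus \set{\beta}$ with $\al \in S$; we must show $\beta$ precedes $S$ in some row of $B$. By suitability of $A$, $\beta$ precedes $S$ in some row $r'$ of $A$. If $r' \neq r$, that row is unchanged in $B$ and we are done. If $r' = r$, then in row $r$ of $A$ the symbol $\beta$ precedes all of $S$, including $\al$; after moving $\al$ to the rightmost position of row $r$, the symbol $\al$ lies at the end, so $\beta$ still precedes $\al$, and $\beta$'s position relative to the other symbols of $S$ (none of which is $\al$) is unchanged. Hence $\beta$ precedes $S$ in row $r$ of $B$.

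Second, consider the $(t-1)$-subsets $S$ of $[v] \setminus \set{\al}$ and the requirement that $\al$ precedes $S$ in some row of $B$. Here is where the hypothesis that $\al$ occurs in the leftmost position of some row is used: in that row, call it row $r_0$, the symbol $\al$ precedes \emph{every} other symbol, hence precedes every such $S$, and row $r_0$ is untouched in $B$ (it is not row $r$, since $\al$ is leftmost there, not moved). Thus all requirements of the form ``$\al$ precedes $S$'' are met in row $r_0$ of $B$. Finally, requirements ``$\beta$ precedes $S$'' with $\al \notin S$ and $\beta \neq \al$ involve only pairs of non-$\al$ symbols, whose relative order is preserved in every row, so these carry over from $A$ verbatim. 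This exhausts all cases, so $B$ is $(N,v,t)$-suitable.

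There is essentially no serious obstacle here; the lemma is a bookkeeping exercise. The one point requiring care is to confirm that the row used to rescue a ``$\beta$ precedes $S$, $\al \in S$'' requirement is exactly the row $r$ being modified and not some other row, and that moving $\al$ to the \emph{end} (rather than, say, deleting it) keeps $\beta$ ahead of $\al$ — both of which are immediate once the cases are laid out as above. I would also remark that the array $B$ genuinely has the same dimensions $N \times v$ and each row remains a permutation of $[v]$, since we only permuted entries within row $r$.
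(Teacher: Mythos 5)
Your proof is correct and follows essentially the same argument as the paper: the row in which $\al$ is leftmost (unchanged in $B$) witnesses all requirements for $\al$, and for any other symbol $\beta$ the witnessing row from $A$ still works because moving $\al$ rightwards cannot destroy the fact that $\beta$ precedes the relevant $t-1$ symbols. The paper's proof is just a terser version of your case analysis.
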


\begin{proof}
The symbol $\al$ precedes all other symbols in some row of $B$, and therefore precedes each set of $t-1$ other symbols in this row. Each symbol $\beta$ other than $\al$ precedes each set of $t-1$ other symbols in at least one row of $A$, and it still precedes the same $t-1$ symbols when some occurrence of $\al$ is moved rightwards to form~$B$.
\end{proof}

For example, we can transform the following $(5,7,3)$-suitable array on the left to the $(5,7,3)$-suitable array on the right by applying Lemma~\ref{pushelt} repeatedly (moving $\al=3$ in row 4, then $\al=4$ in row 4, then $\al=5$ in row 4, then $\al=6$ in row 5, then $\al=7$ in rows 1 through~4).

$$
\left[
\begin{array}{ccccccc}
3&1&2&7&4&5&6 \\
4&1&2&7&3&5&6 \\
5&7&2&1&3&4&6 \\
6&3&4&5&2&1&7 \\
6&7&2&1&3&4&5
\end{array}
\right]
\rightarrow
\left[
\begin{array}{c}
3 \\
4 \\
5 \\
6 \\
7
\end{array}
\right.
\framebox[0.8\width]{
$\begin{array}{cc}
1&2 \\
1&2 \\
2&1 \\
2&1 \\
2&1
\end{array}$
}
\left.
\begin{array}{cccc}
4&5&6&7 \\
3&5&6&7 \\
3&4&6&7 \\
3&4&5&7 \\
3&4&5&6
\end{array}
\right]
$$
Conversely, the boxed $5\times2$ subarray on the right can be transformed back into a $(5,7,3)$-suitable array by choosing 5 new symbols, prepending a different one to each row, and in each row appending the remaining 4 new symbols in arbitrary order.

In general, by applying Lemma~\ref{pushelt} repeatedly we can transform an $(N,v,t)$-suitable array $A$ 
into another $(N,v,t)$-suitable array $B$ having the following properties:

\begin{itemize}

\item
The leftmost column of $B$ consists of $N$ distinct symbols; call these the \ii{first symbols} of~$B$.

\item
Columns $2$ to $v-N+1$ of $B$ consist only of the $v-N$ symbols which are not first symbols.

\item
Columns $v-N+2$ to $v$ of $B$ consist only of first symbols.
\end{itemize}

We can also transform the $N \times (v-N)$ array formed from columns $2$ to $v-N+1$ of $B$ back into an $(N,v,t)$-suitable array by choosing $N$ new symbols, prepending a different one to each row, and in each row appending the remaining $N-1$ new symbols in arbitrary order. We call an $N \times (v-N)$ array that can be transformed into an $(N,v,t)$-suitable array by this procedure an \ii{$(N,v-N,t)$-suitable core}. In the example above, the boxed $5\times2$ subarray is a $(5,2,3)$-suitable core.

We see in this way that the existence of an $(N,v+N,t)$-suitable array is equivalent to the existence of an $(N,v,t)$-suitable core. Given $N$ and $t$, define $\scn(t,N)$ to be the largest $v$ for which an $(N,v,t)$-suitable core exists. This is consistent with the definition~\eqref{scn-defn}, and determining $\sun(t,N)$ is equivalent to determining~$\scn(t,N)$. 

We remark that Colbourn \cite{colbourn} established Theorem~\ref{thm:colbourn}~(i) by constructing an $(s(s+1),s+2,2s)$-suitable core for all $s\ge 2$, and Theorem~\ref{thm:colbourn}~(ii) by constructing an $((s+1)^2,s+2,2s+1)$-suitable core for all $s\ge 1$.

Figures~\ref{483sc}, \ref{955sc}, \ref{1767sc},~\ref{2679sc} show examples of suitable cores with parameters $(4,8,3)$, $(9,5,5)$, $(17,6,7)$, $(26,7,9)$, respectively. To our knowledge, suitable cores with these parameters (and their associated suitable arrays) were not previously known. The first two were found by hand, and the second two by interactive computer search \cite{jedwab-2048}.
These examples imply the bounds $\scn(3,4)\ge 8$, $\scn(5,9)\ge 5$, $\scn(7,17)\ge 6$, $\scn(9,26)\ge7$ mentioned in Section~\ref{sec:intro} as motivation for the explorations leading to Theorems~\ref{thm:odd_s} and~\ref{thm:plusell}.

We next give necessary and sufficient conditions for an array to be an $(N,v,t)$-suitable core; these are essentially contained in \cite[Lemma~1.1]{colbourn}. For an array $C$, symbol $\si$, and subset $T$ of symbols, denote by $\pre{C}(\si,T)$ the set of rows of $C$ for which $\si$ either starts a row or is preceded only by elements of~$T$. 

\begin{proposition}
\label{core}

Let $C$ be an $N\times v$ array. The following statements are equivalent:

\begin{enumerate}
\item[(i)]
$C$ is an $(N,v,t)$-suitable core.

\item[(ii)]
For each $s$ satisfying $0\le s\le t-1$, each symbol of $C$ precedes each subset of $s$ others in at least $t-s$ rows.

\item[(iii)]
For each symbol $\si$ of $C$ and for each subset $T$ of other symbols, $|\pre{C}(\si,T)| \ge t+1-v+|T|$.

\end{enumerate}
\end{proposition}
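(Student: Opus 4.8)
The plan is to derive (i)$\iff$(ii) directly from the definition of a suitable core, and then (ii)$\iff$(iii) by a short complementation argument. Throughout I take the symbols of $C$ to be $1,2,\dots,v$, with each row a permutation of them; this is forced for a suitable core, since in the defining construction each row of $C$ is flanked by all $N$ new symbols (one prepended and $N-1$ appended), so the row of $C$ itself must be a permutation of the remaining $v$ symbols. Fix new symbols $v+1,\dots,v+N$ and let $A$ be the $N\times(v+N)$ array whose $i$-th row consists of $v+i$, then row $i$ of $C$, then the symbols $\{v+1,\dots,v+N\}\setminus\{v+i\}$ in any fixed order; by definition $C$ is an $(N,v,t)$-suitable core precisely when $A$ is $(N,v+N,t)$-suitable. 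The first small step is to record that this does not depend on the chosen order of the trailing new symbols: each new symbol $v+j$ precedes every other symbol in its own row $j$, and each old symbol precedes every trailing new symbol in every row.

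For (i)$\iff$(ii): $A$ is suitable exactly when each old symbol $\sigma$ precedes each $(t-1)$-subset $S$ of the remaining symbols in some row of $A$. Write $S=S_c\sqcup S_n$, with $S_c$ the old symbols and $S_n$ the new symbols of $S$. The key observation is that $\sigma$ precedes $S$ in row $i$ of $A$ if and only if $v+i\notin S_n$ and $\sigma$ precedes all of $S_c$ in row $i$ of $C$; in other words, the $|S_n|=t-1-|S_c|$ new symbols of $S_n$ merely forbid the rows whose leading new symbol lies in $S_n$. So, fixing $\sigma$ and a set $S_c$ of $s$ old symbols with $0\le s\le t-1$ (every value $|S_n|=t-1-s$ from $0$ to $N$ being realizable because $t\le N$), the requirement on $A$ holds for all choices of $S_n$ if and only if every set of $t-1-s$ forbidden rows still leaves a row in which $\sigma$ precedes $S_c$, which holds if and only if the number of such rows is at least $(t-1-s)+1=t-s$. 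This is precisely statement (ii), and every step above is reversible.

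For (ii)$\iff$(iii): fix a symbol $\sigma$ and a subset $T$ of the other symbols, and put $S_c:=[v]\setminus(T\cup\{\sigma\})$, a set of $s:=v-1-|T|$ symbols. Since each row of $C$ is a permutation of $[v]$, the set of symbols preceding $\sigma$ in a given row is contained in $T$ (the case that $\sigma$ starts the row being included) if and only if $\sigma$ precedes every symbol of $S_c$ in that row; hence $|\pre{C}(\sigma,T)|$ equals the number of rows in which $\sigma$ precedes $S_c$. Substituting $|T|=v-1-s$ turns the inequality $|\pre{C}(\sigma,T)|\ge t+1-v+|T|$ of (iii) into "$\ge t-s$", which is exactly the bound of (ii) applied to the set $S_c$; and as $T$ runs over all subsets of $[v]\setminus\{\sigma\}$, so does $S_c$. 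The only discrepancy is that (ii) uses only $s\le t-1$ whereas (iii) also covers $s\ge t$, but for $s\ge t$ the bound $t-s\le 0$ is vacuous, so the two statements agree.

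I expect the delicate step to be (i)$\iff$(ii): translating "a new symbol appearing in the subset $S$" into "a forbidden row", pinning down the little counting fact that an arbitrary family of $k$ forbidden rows always misses a good row exactly when there are at least $k+1$ good rows, and checking that the relevant index range is exactly $0\le s\le t-1$ with the extremes $s=0$ (which uses $t\le N$) and $s=t-1$ behaving correctly. The complementation in (ii)$\iff$(iii) is then routine, resting only on the observation that "$\sigma$ precedes these $s$ symbols" and "$\sigma$ is preceded only by the complementary $v-1-s$ symbols" describe the same set of rows.
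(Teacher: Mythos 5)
Your proof is correct and follows essentially the same route as the paper's: you build the auxiliary $(N,v+N,t)$-suitable array $A$ by adjoining $N$ new symbols, prove (i)$\iff$(ii) by splitting a $(t-1)$-set into old and new symbols and counting which rows are ``blocked'' by new first symbols (the paper phrases one direction contrapositively, but the counting is identical), and prove (ii)$\iff$(iii) by the same complementation $T=[v]\setminus(S\cup\{\si\})$. Your explicit remarks that each row of $C$ is a permutation of $[v]$ and that $t\le N$ makes every size $|S_n|=t-1-s$ realizable are points the paper leaves implicit, but they do not change the argument.
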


\begin{proof}

(i) $\iff$ (ii):

Construct an $N \times (v+N)$ array $A$ from $C$ by adding $N$ new symbols as first symbols and completing the rows of $A$ arbitrarily. From the discussion following Lemma~\ref{pushelt}, statement (i) is equivalent to the statement that $A$ is an $(N,v+N,t)$-suitable array. We now show that this is equivalent to statement~(ii).

Suppose that $C$ does not satisfy~(ii), so that for some $s$ satisfying $0\le s\le t-1$ there is a symbol $\si$ in $C$ and a set $S$ of $s$ other symbols in $C$ such that $\si$ precedes all elements of $S$ in at most $t-s-1$ rows of~$C$. Combine $S$ with the set of first symbols of the corresponding rows of $A$ to give a set of size at most $t-1$, and extend it if necessary to a set of size $t-1$. Then there is no row of $A$ in which $\si$ precedes all elements of this set, and so $A$ is not an $(N,v+N,t)$-suitable array.

On the other hand, suppose that $C$ satisfies~(ii). Let $\si$ be a symbol in $A$ and $S$ be a set of $t-1$ other symbols in~$A$. We shall show that $\si$ precedes all elements of $S$ in some row of~$A$. If $\si$ is a first symbol of $A$, this is immediate. Otherwise, let $S'$ be the set of elements of $S$ which are not first symbols. By assumption, $\si$ precedes all elements of $S'$ in at least $t-|S'|$ rows of~$A$. Since there are only $t-1-|S'|$ elements of $S$ which are first symbols, then at least one of these $t-|S'|$ rows of $A$ does not begin with an element of $S$; in that row, $\si$ precedes all of $S$ (by construction of $A$ from~$C$). Thus $A$ is an $(N,v+N,t)$-suitable array.

(ii) $\iff$ (iii):

Let $\si$ be a symbol of $C$, let $S$ be a set of $s$ other symbols of $C$ where $0 \le s \le t-1$, and let $T = [v] \setminus (S \cup \{\si\})$. Then $\si$ precedes all elements of $S$ in at least $t-s$ rows of $C$ if and only if $|\pre{C}(\si,T)|\ge t-s$.  Note that $|T| = v-s-1$, and that $0 \le s \le t-1$ is equivalent to the trivial conditions $v-t \le |T| \le v-1$.
\end{proof}

\begin{figure}[p]
\begin{minipage}{.5\textwidth}
	\centering
$
\begin{bmatrix}
2&1&4&3&6&5&8&7 \\
3&4&1&2&7&8&5&6 \\
5&6&7&8&1&2&3&4 \\
8&7&6&5&4&3&2&1
\end{bmatrix}
$
	\caption{A $(4,8,3)$-suitable core.}
	\label{483sc}
\end{minipage}%
\begin{minipage}{.5\textwidth}
	\centering
$
\begin{bmatrix}
1&2&3&5&4 \\
2&1&4&5&3 \\
3&1&4&5&2 \\
3&2&4&5&1 \\
4&1&3&5&2 \\
4&2&3&5&1 \\
5&1&4&2&3 \\
5&2&4&1&3 \\
5&3&4&1&2
\end{bmatrix}
$
	\caption{A $(9,5,5)$-suitable core.}
	\label{955sc}
\end{minipage}
\end{figure}

\begin{figure}[p]
\begin{minipage}{.5\textwidth}
	\centering
$
\begin{bmatrix}
1&4&*&*&*&* \\
1&2&5&*&*&* \\
1&6&5&*&*&* \\
2&4&*&*&*&* \\
2&3&*&*&*&* \\
2&6&5&*&*&* \\
3&5&*&*&*&* \\
3&2&1&*&*&* \\
3&6&1&*&*&* \\
4&1&3&*&*&* \\
4&5&3&*&*&* \\
4&6&*&*&*&* \\
5&1&3&*&*&* \\
5&4&2&*&*&* \\
5&6&*&*&*&* \\
6&2&1&*&*&* \\
6&3&4&*&*&*
\end{bmatrix}
$
	\caption{A $(17,6,7)$-suitable core (starred entries may be filled arbitrarily).}
	\label{1767sc}
\end{minipage}%
\begin{minipage}{.5\textwidth}
	\centering
$
\begin{bmatrix}
1&6&5&*&*&*&* \\
1&7&5&*&*&*&* \\
1&3&5&*&*&*&* \\
1&4&2&*&*&*&* \\
2&6&1&*&*&*&* \\
2&7&1&*&*&*&* \\
2&5&3&*&*&*&* \\
2&4&1&*&*&*&* \\
3&6&5&*&*&*&* \\
3&7&5&*&*&*&* \\
3&1&*&*&*&*&* \\
3&2&*&*&*&*&* \\
4&6&1&*&*&*&* \\
4&7&1&*&*&*&* \\
4&5&2&*&*&*&* \\
4&3&*&*&*&*&* \\
5&6&*&*&*&*&* \\
5&7&3&*&*&*&* \\
5&1&2&*&*&*&* \\
5&4&*&*&*&*&* \\
6&7&5&*&*&*&* \\
6&2&*&*&*&*&* \\
6&3&4&*&*&*&* \\
7&6&1&*&*&*&* \\
7&4&*&*&*&*&* \\
7&2&3&*&*&*&*
\end{bmatrix}
$
	\caption{A $(26,7,9)$-suitable core (starred entries may be filled arbitrarily).}
	\label{2679sc}
\end{minipage}
\end{figure}

We now briefly review some results due to Colbourn \cite{colbourn} on suitable cores, which allow the exact determination of $\scn(t,N)$ for all $N$ up to approximately $t^2/4$.
\begin{proposition}[{\cite[Section~1]{colbourn}}]
\label{smallscn}
\mbox{}
\begin{enumerate}
\item[(i)]
Suppose there exists an $(N,v,t)$-suitable core. Then $N\ge i(t+1-i)$ for $i = 1, 2, \dots, \min(v,t)$.

\item[(ii)]
Let $v\le (t+2)/2$. Then an $(N,v,t)$-suitable core exists if and only if $N\ge v(t+1-v)$.

\item[(iii)]
$\scn(t,N)=k$ for each $k \ge 0$ satisfying $k(t+1-k)\le N<(k+1)(t-k)$.

\end{enumerate}
\end{proposition}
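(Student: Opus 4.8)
The plan is to prove the three parts in order, in each case translating the suitability of a core into the numerical conditions of Proposition~\ref{core}(ii). For part~(i), I would fix an $(N,v,t)$-suitable core $C$, an integer $i$ with $1\le i\le\min(v,t)$, and $i$ distinct symbols $\si_1,\dots,\si_i$. For each $j$ let $R_j$ be the set of rows of $C$ in which $\si_j$ precedes every one of $\si_1,\dots,\si_i$ other than itself. Since $\set{\si_1,\dots,\si_i}\setminus\set{\si_j}$ has $i-1\le t-1$ elements, Proposition~\ref{core}(ii) gives $|R_j|\ge t-(i-1)$. The key observation is that $R_1,\dots,R_i$ are pairwise disjoint, because a row lying in both $R_j$ and $R_k$ with $j\ne k$ would have $\si_j$ preceding $\si_k$ and $\si_k$ preceding $\si_j$; adding the sizes gives $N\ge i(t+1-i)$.

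For part~(ii), the ``only if'' direction is exactly part~(i) with $i=v$, which is legitimate because $v\le t$ under the hypothesis. For ``if'', I would first note that appending an arbitrary permutation row to an $(N,v,t)$-suitable core produces an $(N+1,v,t)$-suitable core, since the row counts in Proposition~\ref{core}(ii) only increase; so it suffices to build an $(N,v,t)$-suitable core with $N=v(t+1-v)$. Put $w=t+1-v$, so $N=vw$; the hypothesis $v\le(t+2)/2$ is equivalent to $w\ge v-1$, and since $N(v-1,v-1)\le v-1\le w$ there is a $(w,v-1,v-1)$-suitable array on any prescribed alphabet of $v-1$ symbols. The construction is to take $v$ blocks of $w$ rows: block $\si$ has the symbol $\si$ in the first position of every row and a copy of such a suitable array on $[v]\setminus\set{\si}$ in the remaining $v-1$ columns. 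To check Proposition~\ref{core}(ii), fix a symbol $\si$ and a set $S$ of $s\le v-1$ other symbols; then $\si$ precedes all of $S$ in all $w$ rows of block $\si$, in no row of a block indexed by an element of $S$, and---because for $\gamma\notin S\cup\set{\si}$ the alphabet $[v]\setminus\set{\gamma}$ contains $\si$ together with the at most $v-2$ symbols of $S$---in at least one row of each of the $v-1-s$ remaining blocks. Summing gives at least $w+(v-1-s)=t-s$ rows, as required. I expect this construction, and specifically the realization that the hypothesis $2v\le t+2$ is exactly what makes ``one good row per block'' suffice, to be the main obstacle.

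For part~(iii), writing $g(i)=i(t+1-i)$, the hypothesis $g(k)\le N<g(k+1)$ forces $g(k)<g(k+1)$, hence $k<t/2$, so both $k$ and $k+1$ are at most $(t+2)/2$ and part~(ii) applies to each. Then $N\ge g(k)$ and part~(ii) yield an $(N,k,t)$-suitable core, so $\scn(t,N)\ge k$; while an $(N,k+1,t)$-suitable core would, by part~(i) with $i=k+1$, force $N\ge(k+1)(t-k)$, contradicting $N<(k+1)(t-k)$. Since deleting a symbol from a suitable core and relabelling yields a suitable core on one fewer symbol (equivalently, by the symbol-deletion operation on the associated suitable array described in Section~\ref{sec:intro}), the nonexistence of an $(N,k+1,t)$-suitable core rules out $(N,v,t)$-suitable cores for all $v\ge k+1$, giving $\scn(t,N)\le k$ and hence equality.
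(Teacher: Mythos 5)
Your proposal is correct and follows essentially the same route as the paper: part (i) by the disjoint-row-sets count, part (ii) by taking $i=v$ for necessity and, for sufficiency, a block construction in which each symbol heads $t+1-v$ rows with every other symbol appearing second at least once (your inner $(w,v-1,v-1)$-suitable array is just a packaging of that condition), and part (iii) by combining the two. The only cosmetic differences are that you verify the construction via Proposition~\ref{core}(ii) rather than (iii) and make explicit the symbol-deletion step (Lemma~\ref{reduction}) that the paper leaves implicit in concluding $\scn(t,N)\le k$.
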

\begin{proof}
\mbox{ }
\begin{enumerate}
\item[(i)]

Let $i$ be an integer satisfying $1\le i\le \min(v,t)$. Let $S$ be a subset of $[v]$ of size $i$ and let $\si \in S$. By Proposition~\ref{core}, $\si$ precedes all other elements of $S$ in at least $t+1-i$ rows. As $\si$ ranges over $S$ we obtain $i(t+1-i)$ mutually disjoint rows of the core. 

\item[(ii)]
Suppose an $(N,v,t)$-suitable core exists. Then we may take $i=v$ in (i) to obtain $N \ge v(t+1-v)$.

Now suppose that $N \ge v(t+1-v)$. Construct an $N \times v$ array $C$ whose first $v(t+1-v)$ rows have the following form: each symbol $i \in [v]$ starts a row $t+1-v$ times, and each symbol $j \in [v]$ that is distinct from symbol $i$ occurs directly after $i$ in at least one row. This is possible because $v\le (t+2)/2$ implies $t+1-v \ge v-1$.
We now use Proposition~\ref{core} to show that $C$ is an $(N,v,t)$-suitable core. Let $\si\in [v]$ and let $T$ be a set of symbols other than~$\si$. Then $\pre{C}(\si,T)$ contains the $t+1-v$ rows in which $\si$ appears first, as well as at least $|T|$ rows in which $\si$ appears directly after an element of $T$, for a total of at least $t+1-v+|T|$ rows.

\item[(iii)]
Let $k \ge 0$ satisfy $k(t+1-k) \le N < (k+1)(t-k)$. The range for $N$ given by these inequalities is nonempty exactly when $k \le (t-1)/2$. Therefore we may apply (ii) with $v=k$ and use the assumption $N \ge k(t+1-k)$ to show that there exists an $(N,k,t)$-suitable core and so $\scn(t,N)\ge k$. Then apply (ii) with $v=k+1$ and use the assumption $N < (k+1)(t-k)$ to show that there does not exist an $(N,k+1,t)$-suitable core and so $\scn(t,N)\le k$. We conclude that $\scn(t,N)=k$.

\end{enumerate}
\end{proof}

For fixed $t$, the smallest $N$ for which $\scn(t,N)$ is not determined by Proposition~\ref{smallscn}~(iii) is $N=(t/2)((t+2)/2)$ if $t$ is even, and $((t+1)/2)^2$ if $t$ is odd. These smallest undetermined cases can be written as $\scn(2s,s(s+1))$ when $t$ is even, and $\scn(2s+1,(s+1)^2)$ when $t$ is odd, and this motivates the constructions underlying Theorem~\ref{thm:colbourn}.

%%%PRELIMINARY RESULTS

\section{Preliminary results}
\label{sec:prelim}

We shall use the following two lemmas in our nonexistence results for suitable cores.

\begin{lemma}
\label{reduction}
Removing all occurrences of a single symbol from an $(N,v,t)$-suitable core (and left-justifying the remaining symbols) results in an $(N,v-1,t)$-suitable core.
\end{lemma}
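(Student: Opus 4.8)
The plan is to prove this via the characterization of suitable cores given by Proposition~\ref{core}~(iii), since that condition is entirely ``local'' — it refers only to which symbols precede which within rows — and so is the one most robust under deletion of a symbol. So I would fix an $(N,v,t)$-suitable core $C$, let $\gamma$ be the symbol being deleted, and write $C'$ for the resulting $N\times(v-1)$ array after left-justification. (Here I use that each row of a suitable core is a permutation of its $v$ symbols, so $\gamma$ occurs exactly once per row and the column count does drop by exactly one.) To show $C'$ is an $(N,v-1,t)$-suitable core it then suffices, by Proposition~\ref{core}~(iii), to verify that $|\pre{C'}(\si,T')|\ge t+1-(v-1)+|T'| = t+2-v+|T'|$ for every symbol $\si$ of $C'$ and every set $T'$ of other symbols of $C'$.

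The key step is the containment $\pre{C}(\si,T'\cup\{\gamma\})\subseteq\pre{C'}(\si,T')$. Indeed, if a row lies on the left-hand side then in $C$ either $\si$ starts that row or $\si$ is preceded in it only by symbols of $T'\cup\{\gamma\}$; deleting all $\gamma$'s and left-justifying can only remove symbols from in front of $\si$, so in $C'$ the symbol $\si$ is preceded only by symbols of $T'$ (possibly none, in which case $\si$ now starts the row), and the row lies on the right-hand side. With this in hand I would just do the index bookkeeping: since $\si$ and $\gamma$ are distinct symbols of $C'$ and $\gamma\notin T'$, the set $T:=T'\cup\{\gamma\}$ is an admissible test set for $C$ with $|T|=|T'|+1$, so
\[
|\pre{C'}(\si,T')|\ \ge\ |\pre{C}(\si,T)|\ \ge\ t+1-v+|T|\ =\ t+2-v+|T'|,
\]
which is precisely the bound demanded of $C'$. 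That completes the proof.

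I do not expect a genuine obstacle here; the only points needing care are the small pieces of bookkeeping just flagged — that a column really is removed, that $\gamma$ is excluded from $T'$ so $T'\cup\{\gamma\}$ is a legitimate test set for $C$, and that the threshold $t+1-v+|T|$ becomes $t+2-v+|T'|$ under the simultaneous shifts $v\mapsto v-1$ and $|T|\mapsto|T|-1$. As a sanity check one could instead argue via the associated suitable array: build the $(N,v+N,t)$-suitable array from $C$ in the boxed form described after Lemma~\ref{pushelt}, delete all occurrences of $\gamma$ (a non-first symbol) using the elementary observation of Section~\ref{sec:intro} to get an $(N,v+N-1,t)$-suitable array still in boxed form, and read off its core; but the direct argument above is shorter and I would present that one.
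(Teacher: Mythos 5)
Your proof is correct: the containment $\pre{C}(\si,T'\cup\set{\gamma})\subseteq\pre{C'}(\si,T')$ together with Proposition~\ref{core}~(iii) applied to $C$ gives exactly the bound $t+2-v+|T'|=t+1-(v-1)+|T'|$ required of the reduced array, and your preliminary observation that each row of a core is a permutation of its $v$ symbols is also right. The paper states Lemma~\ref{reduction} without proof (treating it as the core analogue of the deletion observation for suitable arrays made in Section~\ref{sec:intro}), so there is no written argument to compare against; your verification via the criterion of Proposition~\ref{core}~(iii) is the natural way to supply the omitted details and has no gaps.
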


\begin{lemma}
\label{properties}
Suppose that $C$ is an $(N,v,t)$-suitable core.

\begin{enumerate}
\item[(i)]
Let $v \le t$. Then each $k \in [v]$ starts a row at least $t+1-v$ times.

\item[(ii)]
Let $v \le t+1$, let $k$ start a row exactly $t+1-v$ times, and let $j$ be another symbol. Then there is at least one row that starts with $j\,k$.

\item[(iii)]
Let $v \le t+2$, let $k$ start a row exactly $t+2-v$ times, and let $i,j$ be two other distinct symbols. If neither $i\,k$ nor $j\,k$ starts a row, then there is at least one row that starts with $i\,j\,k$ or $j\,i\,k$.

\end{enumerate}
\end{lemma}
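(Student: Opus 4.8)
The plan is to derive all three parts directly from the characterization in Proposition~\ref{core}~(iii), namely that $|\pre{C}(\si,T)| \ge t+1-v+|T|$ for every symbol $\si$ of $C$ and every set $T$ of other symbols. The one auxiliary fact I would invoke is that each row of a suitable core is a permutation of its $v$ symbols, so a given row lies in $\pre{C}(\si,T)$ precisely when every symbol occurring before $\si$ in that row (if any) belongs to $T$. When $|T|$ is small this pins the prefix of such a row down to a short explicit list of possibilities, and those possibilities partition $\pre{C}(\si,T)$; each part is then a set of rows whose size is either prescribed by hypothesis or known to be zero.

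For part~(i), I would apply Proposition~\ref{core}~(iii) with $\si = k$ and $T = \emptyset$: then $\pre{C}(k,\emptyset)$ is exactly the set of rows in which $k$ appears first, and the inequality gives at least $t+1-v$ such rows, the hypothesis $v \le t$ making this count positive. For part~(ii), I would take $\si = k$ and $T = \{j\}$: a row lies in $\pre{C}(k,\{j\})$ exactly when it starts with $k$ or with $j\,k$, and these two classes are disjoint, so the number of rows starting $j\,k$ equals $|\pre{C}(k,\{j\})|$ minus the number starting with $k$, which is at least $(t+2-v)-(t+1-v)=1$.

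For part~(iii), I would take $\si = k$ and $T = \{i,j\}$. Since each row is a permutation, a row lies in $\pre{C}(k,\{i,j\})$ exactly when its prefix up to and including $k$ is one of $k$, $i\,k$, $j\,k$, $i\,j\,k$, or $j\,i\,k$, and these five classes are disjoint. Using the hypothesis that $k$ starts a row exactly $t+2-v$ times and that neither $i\,k$ nor $j\,k$ starts a row, the count of rows starting $i\,j\,k$ or $j\,i\,k$ is at least $(t+3-v)-(t+2-v)-0-0=1$.

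I do not anticipate a genuine obstacle: the lemma is a routine application of Proposition~\ref{core}~(iii). The only point needing care is the bookkeeping — confirming that the listed prefix-classes really do partition the relevant $\pre{C}(\si,T)$ (which is where the permutation property of rows is used) and that the subtracted quantities are exactly those fixed by hypothesis, so that the bound $t+1-v+|T|$ minus the known terms leaves at least one row.
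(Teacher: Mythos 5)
Your proposal is correct and follows essentially the same route as the paper: each part is obtained by applying Proposition~\ref{core}~(iii) with $T=\emptyset$, $T=\set{j}$, and $T=\set{i,j}$ respectively, then subtracting off the rows whose prefixes are accounted for by hypothesis. The explicit prefix-partition bookkeeping you describe is exactly what the paper's proof does implicitly.
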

\begin{proof}
\mbox{}

\begin{enumerate}
\item[(i)]
For $k \in [v]$, apply Proposition \ref{core} with $T=\emptyset$ to show that $|\pre{C}(k,\emptyset)|\ge t+1-v$. 

\item[(ii)]
Apply Proposition \ref{core} with $T=\set{j}$ to show that $|\pre{C}(k,\set{j})|\ge t+2-v$. Since $k$ starts a row exactly $t+1-v$ times, $k$ must be preceded by $j$ and by no other symbol in at least one row. 

\item[(iii)]
Apply Proposition \ref{core} with $T=\set{i,j}$ to show that $|\pre{C}(k,\set{i,j})|\ge t+3-v$. Since $k$ starts a row exactly $t+2-v$ times, $k$ must be preceded by one or both of $i$ and $j$, and by no other symbol, in at least one row. Excluding the cases $i\,k$ and $j\,k$ for the initial symbols of this row leaves only the cases $i\,j\,k$ and $j\,i\,k$.

\end{enumerate}
\end{proof}

%%%LINK BETWEEN T AND T+1

\section{Suitable cores with parameters $t$ and $t+1$}
\label{sec:tt+1}

The following result links suitable cores with parameters $t$ and $t+1$.

\begin{theorem}
\label{thm:tt+1}
Suppose $\scn(t,N) \ge v$ and $N>v(t+1-v)$. Then $\scn(t+1,N+v-1) \ge v$.
\end{theorem}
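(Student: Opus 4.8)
The plan is to construct an $(N+v-1,v,t+1)$-suitable core directly from an $(N,v,t)$-suitable core $C$, whose existence is guaranteed by $\scn(t,N)\ge v$, and to verify the result using the criterion of Proposition~\ref{core}(iii). Concretely, I must produce an $(N+v-1)\times v$ array $C'$ with $|\pre{C'}(\si,T)|\ge t+2-v+|T|$ for every symbol $\si$ and every set $T$ of other symbols. Since $C$ already satisfies $|\pre{C}(\si,T)|\ge t+1-v+|T|$ for all $\si$ and $T$, it suffices to adjoin new rows to $C$ in such a way that every pair $(\si,T)$ acquires at least one extra qualifying row.

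First I would extract the slack from the hypothesis $N>v(t+1-v)$. Write $a_\si$ for the number of rows of $C$ in which $\si$ comes first, so $\sum_\si a_\si=N$. When $v\le t$, Proposition~\ref{core} with $T=\emptyset$ (equivalently Lemma~\ref{properties}(i)) forces $a_\si\ge t+1-v$ for every $\si$, so the \emph{strict} inequality $N>v(t+1-v)$ yields a symbol $\si_0$ with $a_{\si_0}\ge t+2-v$. When $v>t$ the quantity $t+2-v$ is at most $1$, and the existence of such a $\si_0$ is immediate from $N\ge1$ (itself a consequence of the hypothesis); these cases are essentially degenerate. This $\si_0$ is the symbol for which $C'$ will not need a fresh ``starting'' row.

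Then I would form $C'$ by appending to $C$ exactly $v-1$ new rows, one for each symbol $\si\ne\si_0$, where the row associated to $\si$ begins $\si\,\si_0$ and is completed by the remaining $v-2$ symbols in arbitrary order. The verification via Proposition~\ref{core}(iii) splits into three routine cases. For $\si\ne\si_0$: the new row beginning with $\si$ lies in $\pre{C'}(\si,T)$ for every $T$, so $|\pre{C'}(\si,T)|\ge|\pre{C}(\si,T)|+1\ge t+2-v+|T|$. For $\si=\si_0$ and $T=\emptyset$: no new row starts with $\si_0$, so $|\pre{C'}(\si_0,\emptyset)|=a_{\si_0}\ge t+2-v$. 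For $\si=\si_0$ and $T\ne\emptyset$: choosing any $\si'\in T$ (necessarily distinct from $\si_0$), the new row $\si'\,\si_0\,\dots$ lies in $\pre{C'}(\si_0,T)\setminus\pre{C}(\si_0,T)$, so $|\pre{C'}(\si_0,T)|\ge|\pre{C}(\si_0,T)|+1\ge t+2-v+|T|$. Hence $C'$ is an $(N+v-1,v,t+1)$-suitable core, and $\scn(t+1,N+v-1)\ge v$.

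The conceptual crux — and the one place the hypotheses really bite — is the counting step. Adjoining one new row starting with \emph{each} of the $v$ symbols would only give $\scn(t+1,N+v)\ge v$, which is weaker; what makes the row count $v-1$ work is recognizing that $N>v(t+1-v)$ supplies exactly one ``over-starting'' symbol $\si_0$, whose remaining obligations (the pairs $(\si_0,T)$ with $T\ne\emptyset$) can be discharged for free by placing $\si_0$ second in the other $v-1$ new rows. I expect no genuine obstacle beyond bookkeeping; the three-case verification is mechanical, and the only care needed is the $v>t$ boundary behaviour noted above. Specializing to $t=2s$, $v=s+2$, $N=s(s+1)$ (where $N=s^2+s>s^2+s-2=v(t+1-v)$) then recovers Theorem~\ref{thm:colbourn}(ii) from Theorem~\ref{thm:colbourn}(i).
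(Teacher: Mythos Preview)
Your proposal is correct and follows essentially the same approach as the paper: identify a symbol $\si_0$ (the paper relabels it as $v$) that starts at least $t+2-v$ rows via the strict inequality $N>v(t+1-v)$, append the $v-1$ rows beginning $\si\,\si_0$ for each $\si\ne\si_0$, and verify the core condition of Proposition~\ref{core}(iii) by the same case split on whether $\si=\si_0$ and whether $T=\emptyset$. Your explicit treatment of the boundary case $v>t$ is extra care the paper omits (its pigeonhole phrasing covers it implicitly), but the argument is otherwise identical.
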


\begin{proof}
We suppose that $C$ is an $(N,v,t)$-suitable core, where $N>v(t+1-v)$, and prove the result by constructing an $(N+v-1,v,t+1)$-suitable core~$D$.
Since $N>v(t+1-v)$, by the pigeonhole principle some symbol starts a row of $C$ at least $t+2-v$ times; relabel if necessary so that this symbol is~$v$. Form $D$ by adding $v-1$ rows to $C$, these rows starting with $1\,v,2\,v,\dots,(v-1)\,v$. We now show that $D$ is an $(N+v-1,v,t+1)$-suitable core using Proposition~\ref{core}. Let $\si\in [v]$ and let $T$ be a (possibly empty) set of symbols other than~$\si$. We distinguish two cases.

\begin{description}
\item[Case 1] $\si\ne v$. Then $\pre{C}(\si,T)$ consists of at least $t+1-v+|T|$ rows. Combine with the extra row of $D$ starting $\si\,v$ to give the necessary $t+2-v+|T|$ rows for $\pre{D}(\si,T)$.

\item[Case 2] $\si=v$. When $T$ is empty, the required condition $|\pre{D}(v,\emptyset)| \ge t+2-v$ is satisfied because of the $t+2-v$ rows that start with~$v$. When $T$ is nonempty, choose $c \in T$ and then the required condition $|\pre{D}(v,T)| \ge t+2-v+|T|$ is satisfied because there are at least $t+1-v+|T|$ rows in $\pre{C}(v,T)$ and the extra row of $D$ starting~$c\,v$.
\end{description}

\end{proof}

We note two important corollaries of Theorem~\ref{thm:tt+1}.

\begin{corollary}
\label{cor_extension}
Theorem~\ref{thm:colbourn}~(i) implies Theorem~\ref{thm:colbourn}~(ii) for $s>1$.
\end{corollary}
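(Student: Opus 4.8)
The plan is to apply Theorem~\ref{thm:tt+1} directly, with the substitution $t = 2s$, $N = s(s+1)$, and $v = s+2$. With these choices the hypothesis $\scn(t,N) \ge v$ becomes $\scn(2s, s(s+1)) \ge s+2$, which is precisely the statement of Theorem~\ref{thm:colbourn}~(i); this is available exactly for $s \ge 2$, which is why the corollary is stated for $s > 1$.

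Next I would verify the numerical hypothesis $N > v(t+1-v)$. With the substitution this reads $s(s+1) > (s+2)\bigl((2s+1) - (s+2)\bigr) = (s+2)(s-1)$. Since $(s+2)(s-1) = s^2 + s - 2 < s^2 + s = s(s+1)$, the inequality holds for every $s$, so in particular for all $s \ge 2$.

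Theorem~\ref{thm:tt+1} then yields $\scn(t+1, N+v-1) \ge v$, i.e.\ $\scn(2s+1,\, s(s+1) + (s+2) - 1) \ge s+2$. The final step is the arithmetic identity $s(s+1) + (s+2) - 1 = s^2 + 2s + 1 = (s+1)^2$, so the conclusion is exactly $\scn(2s+1, (s+1)^2) \ge s+2$, which is Theorem~\ref{thm:colbourn}~(ii).

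There is no real obstacle here: the corollary is essentially a matter of choosing the right parameters and checking two easy inequalities and one algebraic identity. The only point that requires a moment's care is keeping track of the constraint $s \ge 2$ inherited from Theorem~\ref{thm:colbourn}~(i) (explaining the restriction $s > 1$), and confirming that $N + v - 1$ collapses to the perfect square $(s+1)^2$.
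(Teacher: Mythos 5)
Your proposal is correct and follows exactly the paper's route: apply Theorem~\ref{thm:tt+1} with $(N,v,t)=(s(s+1),s+2,2s)$, which the paper states in one line; you simply spell out the verification of $N>v(t+1-v)$ and the identity $s(s+1)+(s+2)-1=(s+1)^2$ that the paper leaves implicit.
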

\begin{proof}
Apply Theorem~\ref{thm:tt+1} with $(N,v,t)=(s(s+1),s+2,2s)$ for each $s > 1$.
\end{proof}

\begin{corollary}
\label{splus3_parity}
Theorem~\ref{thm:plusell}~(ii) implies Theorem~\ref{thm:plusell}~(i).
\end{corollary}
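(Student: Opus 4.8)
The plan is to prove Theorem~\ref{thm:plusell}~(i) by combining the upper bound coming from a contrapositive application of Theorem~\ref{thm:tt+1} with a cheap lower bound. For the lower bound, note that by Theorem~\ref{thm:colbourn}~(i) there is an $(s(s+1),s+2,2s)$-suitable core for every $s \ge 2$; adjoining $\ell$ further rows (each a permutation of $[s+2]$) can only enlarge every set $\pre{C}(\si,T)$, so by Proposition~\ref{core}~(iii) the result is an $(s(s+1)+\ell,s+2,2s)$-suitable core, giving $\scn(2s,s(s+1)+\ell) \ge s+2$ for all $s\ge 2$ and all $\ell \ge 0$.

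For the matching upper bound, fix $\ell \ge 0$ and suppose for contradiction that $\scn(2s,s(s+1)+\ell) \ge s+3$ for some $s$. Apply Theorem~\ref{thm:tt+1} with $t = 2s$, $N = s(s+1)+\ell$, and $v = s+3$. The side hypothesis $N > v(t+1-v)$ reads $s(s+1)+\ell > (s+3)(s-2) = s^2+s-6$, which holds automatically since $\ell \ge 0$. The conclusion is $\scn(2s+1,\,N+v-1) \ge s+3$, and the arithmetic identity $N+v-1 = s(s+1)+\ell+s+2 = (s+1)^2 + (\ell+1)$ rewrites this as $\scn(2s+1,(s+1)^2+(\ell+1)) \ge s+3$.

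Now I would invoke Theorem~\ref{thm:plusell}~(ii), but applied to the integer $\ell+1$ in place of $\ell$: there is an $s_1$ (depending on $\ell$) such that $\scn(2s+1,(s+1)^2+(\ell+1)) = s+2$ for all $s \ge s_1$. For every $s \ge s_1$ this contradicts the inequality obtained in the previous step, so $\scn(2s,s(s+1)+\ell) \le s+2$ for all $s \ge s_1$. Setting $s_0 = \max(s_1,2)$ and combining with the lower bound yields $\scn(2s,s(s+1)+\ell) = s+2$ for all $s \ge s_0$, which is Theorem~\ref{thm:plusell}~(i).

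There is no genuine technical obstacle in this argument — it is purely a matter of feeding the right parameters into Theorem~\ref{thm:tt+1}. The only points demanding care are: checking that the side condition $N > v(t+1-v)$ of Theorem~\ref{thm:tt+1} is satisfied for \emph{every} $s$ (so that no extra restriction on $s$ is introduced); verifying the identity $N+v-1 = (s+1)^2+(\ell+1)$, which is what forces us to apply part~(ii) with the \emph{incremented} parameter $\ell+1$ rather than with $\ell$; and remembering that the lower bound from Theorem~\ref{thm:colbourn}~(i) is only claimed for $s \ge 2$, so that $s_0$ must be taken at least~$2$.
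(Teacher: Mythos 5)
Your proposal is correct and follows essentially the same route as the paper: the lower bound comes from Theorem~\ref{thm:colbourn}~(i) plus the monotonicity of $\scn$ under adding rows, and the upper bound comes from applying Theorem~\ref{thm:tt+1} with $(N,v,t)=(s(s+1)+\ell,s+3,2s)$ in contrapositive form, which forces Theorem~\ref{thm:plusell}~(ii) to be invoked with parameter $\ell+1$. The only difference is that you spell out the arithmetic checks (the side condition $N>v(t+1-v)$ and the identity $N+v-1=(s+1)^2+\ell+1$) that the paper leaves implicit.
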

\begin{proof}
We have the general result that $\scn(t,N+1) \ge \scn(t,N)$, by considering the addition of an arbitrary extra row to a suitable core. 
In view of Theorem~\ref{thm:colbourn}~(i), we then see that Theorem~\ref{thm:plusell}~(i) is equivalent to: given an integer~$\ell \ge 0$, we have $\scn(2s,s(s+1)+\ell) < s+3$ for all sufficiently large $s$. Likewise, in view of Theorem~\ref{thm:colbourn}~(ii), we see that Theorem~\ref{thm:plusell}~(ii) is equivalent to: given an integer~$\ell' \ge 1$, we have $\scn(2s+1,(s+1)^2+\ell') < s+3$ for all sufficiently large~$s$. 

Apply Theorem~\ref{thm:tt+1} with $(N,v,t)=(s(s+1)+\ell,s+3,2s)$ to show that if $\scn(2s,s(s+1)+\ell) \ge s+3$ then $\scn(2s+1,(s+1)^2+\ell+1) \ge s+3$, and take $\ell' = \ell+1$.
\end{proof}

%%% PROOF OF THEOREM~\ref{thm:odd_s}

\section{Proof of Theorem~\ref{thm:odd_s}}
\label{sec:oddcase}

In this section we prove Theorem~\ref{thm:odd_s}. As described in Section~\ref{sec:intro}, the nonexistence result of Theorem~\ref{thm:odd_s} holds for all $s \ge 3$ but not for $s=1$ and $s=2$ (see Figures~\ref{483sc} and~\ref{955sc}). We shall see where the condition $s \ge 3$ is required in the proof of Theorem~\ref{thm:odd_s}, and why the proof does not apply to the $(9,5,5)$-suitable core shown in Figure~\ref{955sc}.

\begin{proof}[Proof of Theorem~\ref{thm:odd_s}]
In view of Theorem~\ref{thm:colbourn}~(ii), it is required to prove that for each $s \ge 3$ there does not exist an $((s+1)^2,s+3,2s+1)$-suitable core.
Suppose, for a contradiction, that $C$ is such a suitable core.

Note from Lemma \ref{properties}~(i) that each of the $s+1$ symbols of an $((s+1)^2,s+1,2s+1)$-suitable core $C'$ starts a row at least $s+1$ times, and since this accounts for all $(s+1)^2$ rows of $C'$ we have that
\begin{equation}
\label{equidistribution}
\mbox{each symbol of an $((s+1)^2,s+1,2s+1)$-suitable core starts a row exactly $s+1$ times}.
\end{equation}
It follows that
\begin{align}
& \mbox{no symbol $\si$ of $C$ starts a row more than $s+1$ times after removing all occurrences of} \nonumber \\[-1ex]
& \mbox{zero, one or two other symbols from $C$}, \label{nots+1}
\end{align}
for otherwise we could remove all occurrences of another two, one or zero other symbols, respectively, and by Lemma~\ref{reduction} would obtain an $((s+1)^2,s+1,2s+1)$-suitable core in which $\si$ starts a row more than $s+1$ times, contrary to~\eqref{equidistribution}.

Relabel if necessary so that the symbols of $C$ are elements of $[s+3]$ and the number of rows starting with $i$ is nondecreasing with~$i$. By Lemma~\ref{properties}~(i), each of the $s+3$ symbols of $C$ starts a row at least $s-1$ times. This accounts for $(s+3)(s-1) = (s+1)^2-4$ of the $(s+1)^2$ rows of $C$, leaving four more rows to account for. By \eqref{nots+1} (with ``zero''), there are three possible distributions for the symbols that start these four rows:

\begin{description}

\item[Case 1] Symbols $1$ to $s-1$ each start a row exactly $s-1$ times, and symbols $s$ to $s+3$ each start a row exactly $s$ times.

By Lemma \ref{properties}~(ii), each symbol that starts a row in $C$ exactly $s-1$ times must appear second after each other symbol. Therefore $C$ contains a row starting $j\,k$ for each $k=1,2,\dots,s-1$ and for each $j \ne k$. The $s+3$ other rows of $C$ each start with a different symbol of $[s+3]$. 

Among these $s+3$ rows, no $k$ from $1$ to $s-1$ can appear second, otherwise $C$ would contain two rows starting $j\,k$ for some $j$ and a row starting $i\,k$ for some $i$ distinct from $j$ for which $i \ge s$; removing all occurrences of symbols $i$ and $j$ from $C$ would then leave at least $(s-1)+3 = s+2$ rows starting with $k$, contradicting~\eqref{nots+1}.

Furthermore, among these $s+3$ rows, no $k$ from $s$ to $s+3$ can appear second more than once, otherwise $C$ would contain a row starting $i\,k$ and a row starting $j\,k$ for $i,j$ not necessarily distinct; removing all occurrences of symbols $i$ and $j$ from $C$ would again leave at least $s+2$ rows starting with $k$, contradicting~\eqref{nots+1}.

Therefore each of the $s+3$ rows must contain a distinct symbol from $s$ to $s+3$ in its second position, which gives the contradiction $s+3 \le 4$.

\item[Case 2] Symbols $1$ to $s$ each start a row exactly $s-1$ times, symbols $s+1$ and $s+2$ each start a row exactly $s$ times, and symbol $s+3$ starts a row exactly $s+1$ times.

By Lemma \ref{properties}~(ii), $C$ contains a row starting $i\,j$ for each $j=1,2,\dots,s$ and for each $i \ne j$. There is only one other row of $C$ and it starts with~$s+3$.

By Lemma \ref{properties}~(iii), since $s+1$ and $s+2$ each start a row in $C$ exactly $s$ times, for each $i,j$ satisfying $1 \le i < j \le s$ either $C$ contains rows starting $i\,j\,(s+1)$ and $j\,i\,(s+2)$ or $C$ contains rows starting $i\,j\,(s+2)$ and $j\,i\,(s+1)$. 

It follows that $s+3$ never occurs second or third in a row of~$C$ that starts with 1, 2, or 3, and, because $s \ge 3$, no row of $C$ starts with the symbols $1,2,3$ in any order.

But by Proposition~\ref{core}, $\pre{C}(s+3, \set{1,2,3})$ contains at least $s+2$ rows. Since there are exactly $s+1$ rows of $C$ starting with $s+3$, there is some row of $C$ that does not start with $s+3$ in which $s+3$ is preceded only by elements of $\set{1,2,3}$. This gives the required contradiction.

\item[Case 3] Symbols $1$ to $s+1$ each start a row exactly $s-1$ times, and symbols $s+2$ and $s+3$ each start a row exactly $s+1$ times.

By Lemma~\ref{properties}~(ii) with $j=1$ and $k=2,3,\dots,s+1$, there are at least $s$ rows starting with~$1$. This contradicts that symbol $1$ starts a row exactly $s-1$ times.

\end{description}

\end{proof}

Note that the proof of Theorem~\ref{thm:odd_s} does not apply to the $(9,5,5)$-suitable core shown in Figure~\ref{955sc}, because its first row starts with the symbols $1, 2, s+1$ where $s=2$.

Colbourn \cite{colbourn} states without proof that $\scn(2s,s(s+1))= s+2$ for all $s \ge 2$, in other words that the inequality of Theorem~\ref{thm:colbourn}~(i) is actually an equality. This result can be recovered using similar techniques to those in the above proof of Theorem~\ref{thm:odd_s}.

%%%PROOF OF THEOREM~\ref{thm:plusell}

\section{Proof of Theorem~\ref{thm:plusell}}
\label{sec:ramsey}

In this section we prove Theorem~\ref{thm:plusell}. We first establish two auxiliary lemmas.

Lemma~\ref{counting1} shows that if, for a set $A$, we associate each element of $A$ with a subset of $A$ of size at most $m$, then some element of $A$ appears in at most $m$ of the subsets.

\begin{lemma}
\label{counting1}
Let $d$ and $m$ be positive integers. Let $A$ be a set of size $d$ and let $g$ be a function from $A$ to subsets of $A$ of size at most~$m$. Then there exists $k\in A$ for which $\set{\ell\in A:k\in g(\ell)}$ has at most $m$ elements.
\end{lemma}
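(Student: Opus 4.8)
The plan is to count the total number of incidences in two ways. Consider the quantity
$$
\sum_{\ell \in A} |g(\ell)| = \sum_{k \in A} |\{\ell \in A : k \in g(\ell)\}|,
$$
where the left-hand side sums the sizes of the assigned subsets and the right-hand side regroups the same incidences by the element $k$ being covered. Since $g(\ell)$ has size at most $m$ for every $\ell \in A$, and $|A| = d$, the left-hand side is at most $md$. Hence the right-hand side, a sum of $d$ nonnegative integers, is at most $md$, so by averaging (the pigeonhole principle) at least one of its terms is at most $m$. That term is $|\{\ell \in A : k \in g(\ell)\}|$ for some $k \in A$, which is exactly the desired conclusion.

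I would write this as a short direct argument: state the double-counting identity, bound the left side by $md$, and invoke averaging to extract the element $k$. There is no real obstacle here — the only point to be slightly careful about is that $g(\ell)$ is a subset of $A$ (so $k \in g(\ell)$ makes sense only for $k \in A$), which is given, and that the averaging step needs $d \geq 1$, which holds since $d$ is a positive integer. The lemma is essentially a packaging of "the average in-degree in a digraph with bounded out-degree is bounded", stated in the form most convenient for the Ramsey-type argument to follow.
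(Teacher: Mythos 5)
Your proposal is correct and follows essentially the same argument as the paper: both count the incidences $\sum_{k\in A}|\{\ell\in A: k\in g(\ell)\}|$, bound the total by $md$ using $|g(\ell)|\le m$, and conclude by averaging that some term is at most $m$.
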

\begin{proof}
Let $f(k)=|\set{\ell\in A:k\in g(\ell)}|$. Then $\sum_{k\in A} f(k)\le md$, and so the mean of $f(k)$ over $k\in A$ is at most $m$. So $f(k)\le m$ for some $k\in A$.
\end{proof}

We next refine Lemma~\ref{counting1} to show that, if $d$ is large enough, we can choose $e$ elements of $A$, each of which appears in none of the subsets associated with the other $e-1$ elements.

\begin{lemma}
\label{counting2}
Let $e$ and $m$ be positive integers and $d\ge (e-1)(2m+1)+1$. Let $A$ be a set of size $d$ and let $g$ be a function from $A$ to subsets of $A$ of size at most $m$. Then there exists a subset $B$ of $A$ of size $e$ such that $j\notin g(i)$ for all distinct $i,j\in B$.
\end{lemma}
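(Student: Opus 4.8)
The plan is to apply Lemma~\ref{counting1} iteratively, peeling off one good element of $B$ at a time while discarding the elements that are "tainted" by it, and to verify that the size condition $d \ge (e-1)(2m+1)+1$ guarantees the process survives for $e$ rounds. First I would set $A_1 = A$ and, having constructed $A_i$, apply Lemma~\ref{counting1} to the set $A_i$ with the function $g$ restricted suitably (replacing each $g(\ell)$ by $g(\ell) \cap A_i$, which still has size at most $m$) to obtain an element $k_i \in A_i$ such that $\{\ell \in A_i : k_i \in g(\ell)\}$ has at most $m$ elements. I would then remove from $A_i$ the element $k_i$ itself, the at most $m$ elements $\ell$ with $k_i \in g(\ell)$, and the at most $m$ elements of $g(k_i)$, setting $A_{i+1}$ to be what remains; so $|A_{i+1}| \ge |A_i| - (2m+1)$.

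Taking $B = \{k_1, \dots, k_e\}$, I would check it has the required property: for $i < j$, the element $k_j$ lies in $A_j \subseteq A_{i+1}$, so $k_j$ was not discarded at step $i$, which means $k_j \notin g(k_i)$ (since $g(k_i)$ was removed) and $k_i \notin g(k_j)$ (since all $\ell$ with $k_i \in g(\ell)$ were removed). Hence $j \notin g(i)$ for all distinct $i, j \in B$. The counting check is that to run $e$ steps we need $A_e$ to be nonempty, and since each step shrinks the set by at most $2m+1$, we have $|A_i| \ge d - (i-1)(2m+1)$, so $|A_e| \ge d - (e-1)(2m+1) \ge 1$ by hypothesis; thus $k_1, \dots, k_e$ are all defined and, being removed along with their "neighborhoods" at successive steps, are automatically distinct.

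The only subtlety — and the step I would be most careful about — is the bookkeeping around restricting $g$ to $A_i$ and ensuring Lemma~\ref{counting1} still applies: Lemma~\ref{counting1} requires a function from $A_i$ to subsets of $A_i$, so I must explicitly pass to $\ell \mapsto g(\ell) \cap A_i$, and I should note that this only shrinks the sets, so the size bound $m$ is preserved, and that $k_i \in g(\ell)$ with $\ell \in A_i$ is equivalent to $k_i \in g(\ell) \cap A_i$ since $k_i \in A_i$. Everything else is a routine induction, and no case analysis or external machinery beyond Lemma~\ref{counting1} is needed.
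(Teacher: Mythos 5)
Your proposal is correct and is essentially the paper's argument: the paper phrases the same greedy peeling as an induction on $e$, at each stage applying Lemma~\ref{counting1}, deleting the set $\set{k}\cup g(k)\cup\set{\ell:k\in g(\ell)}$ of size at most $2m+1$, and restricting $g$ to the remaining elements exactly as you do. Your explicit iteration with the bound $|A_i|\ge d-(i-1)(2m+1)$ is just an unrolled version of that induction, and your bookkeeping (restricting $g(\ell)$ to $A_i$ and checking both $k_j\notin g(k_i)$ and $k_i\notin g(k_j)$) matches the paper's verification.
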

\begin{proof}
The proof is by induction on $e\ge 1$, with $m$ and $d$ satisfying the stated conditions.

If $e=1$, then $|A| = d\ge 1$. Then simply choose $B$ to comprise one element in~$A$.

Now let $e>1$ and assume the statement is true for all positive integers less than $e$ and for all $m$ and $d$ satisfying the stated conditions. By Lemma \ref{counting1}, there exists $k\in A$ such that $|\set{\ell\in A:k\in g(\ell)}|\le m$. Let $S=\set{k}\cup g(k)\cup\set{\ell\in A:k\in g(\ell)}$ and $A'=A\setminus S$. Note that $|S|\le 2m+1$, and so $|A'|\ge d-(2m+1)\ge (e-2)(2m+1)+1$. We then define a function $g'$ from $A'$ to subsets of $A'$ of size at most $m$ as follows: for each $b\in A'$, $g'(b)=g(b)\cap A'$. Clearly $|g'(b)|\le m$ for each $b\in A'$.

By the inductive hypothesis applied to $A'$ and $g'$, there exists a subset $B'$ of $A'$ of size $e-1$ such that $j\notin g'(i)$ for all distinct $i,j\in B'$. Then we let $B=B'\cup \set{k}$. Since $k\notin B'$, we have $|B|=e$. Now let $j\in B$. We complete the induction by showing that $j \notin g(i)$ for all $i \in B\setminus \set{j}$.

\begin{description}
\item[Case 1] $j\ne k$. Then $j\notin g(k)$ by definition of $S$ and $A'$, since $j\in A'$ and $g(k)\subseteq S$. Also, $j \notin g(i)$ for all $i\in B'\setminus \set{j}$, since $j\notin g'(i)$ and $g'(i)=g(i)\cap A'$. Together this gives $j \notin g(i)$ for all $i\in B\setminus \set{j}$.

\item[Case 2] $j=k$. Then $k\notin g(i)$ for all $i\in B\setminus \set{k}$, since $\set{\ell\in A:k\in g(\ell)}\subseteq S$ and $i\in A'$.
\end{description}
\end{proof}

We are now ready to prove Theorem~\ref{thm:plusell}.

\begin{proof}[Proof of Theorem~\ref{thm:plusell}]
By Corollary~\ref{splus3_parity}, it is sufficient to prove only part~(ii) of the theorem. In view of Theorem~\ref{thm:colbourn}~(ii), it is required to prove that for all sufficiently large $s$ there does not exist an $((s+1)^2+\ell,s+3,2s+1)$-suitable core (where $\ell$ is a fixed nonnegative integer). Suppose, for a contradiction, that there is some arbitrarily large $s$ for which $C$ is such a suitable core.

Relabel if necessary so that the symbols of $C$ are elements of $[s+3]$ and the number of rows starting with $i$ is nondecreasing with~$i$. By Lemma~\ref{properties}~(i), each of the $s+3$ symbols of $C$ starts a row at least $s-1$ times. This accounts for $(s+3)(s-1) = (s+1)^2-4$ of the $(s+1)^2+\ell$ rows of $C$, leaving $\ell+4$ more rows to account for. The number of symbols that start a row more than $s-1$ times is then at most $\ell+4$. Let $c$ be the number of symbols that start a row exactly $s-1$ times, so that each of $1$ to $c$ starts a row exactly $s-1$ times and $c \ge s+3-(\ell+4) = s-\ell-1$.

By Lemma~\ref{properties}~(ii), $C$ contains a row starting $i\,j$ for each $j=1,2,\dots,c$ and for each $i\ne j$. Form $C'$ from $C$ by deleting the first such row for every such pair $(i,j)$. Then in $C'$, each of $1$ to $c$ starts a row exactly $m:=s-c$ times. Since $c\ge s-\ell-1$, we have $m\le \ell+1$.

The number of elements in $\set{c+1,\dots, s+3}$ is $s-c+3=m+3$. For each $i \in [c]$, since there are $m$ rows of $C'$ starting with $i$ there are at least $3$ elements of $\set{c+1,\dots, s+3}$ which do not appear second after $i$ in $C'$, and so do not appear second after $i$ in~$C$.
We may therefore define a function $f$ from $[c]$ to 3-subsets of $\set{c+1,\dots, s+3}$, such that $f(i)=\set{j_1,j_2,j_3}$ where $j_1,j_2,j_3$ do not appear second after $i \in [c]$.

Now choose $s$ to be large enough to force $c\ge\binom{m+3}{3}(d-1)+1$ (via the inequality $c\ge s-\ell-1$), where $d\ge 1$ is an integer to be determined later. Then, by the pigeonhole principle, there exists a set of $d$ numbers $A=\set{a_1, a_2, \dots, a_d}$ in $[c]$ for which $f(a_1)=f(a_2)=\dots=f(a_d)$. Let $\set{k_1,k_2,k_3}=f(a_1)$.

Next choose $d\ge (e-1)(2m+1)+1$, where $e\ge 1$ is an integer to be determined later. Define the function $g$ from $A$ to subsets of $A$ via: for each $a\in A$, $g(a)$ is the set of elements of $A$ appearing second in the rows of $C'$ that start with $a$; so $g(a)$ has size at most~$m$. By Lemma \ref{counting2}, there exists a subset $B=\set{b_1,\dots, b_e}$ of $A$ of size $e$ such that $b_i\notin g(b_j)$ for all distinct $i,j$.
It follows that no row of $C'$ starting with an element of $B$ has an element of $B$ appearing second. By the construction of $C'$ from $C$, we conclude that for each pair of distinct elements $b_x,b_y$ of $B$ there is exactly one row of $C$ starting~$b_x\,b_y$.

Now associate with $C$ a graph $G$ whose vertex set is~$[e]$.  For each $x,y\in [e]$, there is at least one element of the set $\set{k_1,k_2,k_3}$ that precedes the other two in neither the row starting $b_x\,b_y$ nor the row starting $b_y\,b_x$; choose one such element and color the edge between vertices $x,y$ with color $1$ if the choice is $k_1$, color $2$ if it is $k_2$, and color $3$ if it is $k_3$. The resulting graph $G$ is a complete graph $K_e$ on $e$ vertices whose edges are colored from a set of $3$ colors.

Recall that $k_1,k_2,k_3$ are in $\set{c+1,\dots, s+3}$ by definition of $f$. Now in $C$, the symbols $k_1, k_2, k_3$ start a row $s-1+r_1,s-1+r_2,s-1+r_3$ times, respectively, for some positive integers $r_1,r_2,r_3$. Let $T$ be a subset of $B$ of size $r_1+1$. Then by Proposition \ref{core}, $\pre{C}(k_1,T)$ contains at least $s+r_1$ rows. Then there is some row of $C$, that does not start with $k_1$, in which $k_1$ is preceded only by elements of~$T$. By the definition of $f$, $k_1$ does not appear second after $b_1, b_2, \dots, b_e$, so this row starts $b_x\,b_y$ for some distinct elements $b_x,b_y$ of $T$, and certainly in this row $k_1$ precedes $k_2$ and $k_3$. The edge joining vertices $x$ and $y$ of $G$ is therefore not~colored~$1$. Since this applies over all subsets $T$ of $B$ of size $r_1+1$, this means that $G$ does not contain a $K_{r_1+1}$ of color $1$.
A similar analysis holds for $k_2$ ($T$ has size $r_2+1$) and $k_3$ ($T$ has size $r_3+1$), and so $G$ also contains neither a $K_{r_2+1}$ of color $2$ nor a $K_{r_3+1}$ of color $3$.

However, by Ramsey's theorem \cite{ramsey}, for some $v\ge 1$, denoted $R(r_1+1,r_2+1,r_3+1)$, each edge coloring of a complete graph on $v$ vertices using three colors contains either a $K_{r_1+1}$ of color $1$, or a $K_{r_2+1}$ of color $2$, or a $K_{r_3+1}$ of color $3$. Choose $e=v$ to give the required contradiction.

\end{proof}

%%% OPEN PROBLEMS

\section{Open problems}
We conclude with some open problems suggested by the results of this paper.  

\begin{enumerate}

\item
Theorem~\ref{thm:plusell}~(ii) specifies the existence of $s_0$ for which an expression involving $\scn$ holds for all $s \ge s_0$; but our proof, using Ramsey's theorem, does not determine a minimum~$s_0$. Given a nonnegative integer $\ell$, what is the smallest possible value of $s_0$ and how does it grow with~$\ell$? 

\item
The examples of suitable cores given in Figures~\ref{1767sc} and~\ref{2679sc} show that the inequality
\begin{equation}
\scn(2s+1,(s+1)^2+\ell) > s+2
\label{ineq}
\end{equation} 
holds for $\ell=1$ and $s=3,4$.
However, Theorem~\ref{thm:plusell}~(ii) shows that \eqref{ineq} holds for only finitely many $s$ when $\ell$ is a fixed positive integer. 
But if $\ell$ is allowed to increase with $s$ then \eqref{ineq} can hold for infinitely many~$s$: substitute $s+1$ for $s$ in Theorem~\ref{thm:colbourn}~(ii), and use the general result from Proposition~\ref{core} that $\scn(t,N)\ge v$ implies $\scn(t-1,N) \ge v$, to show that $\ell = 2s+3$ suffices. Does a function of $s$ growing more slowly than $2s+3$ suffice for \eqref{ineq} to hold for infintely many~$s$? Does a function of $s$ growing more slowly than linearly with~$s$ suffice?

\end{enumerate}

%%%ACKNOWLEDGEMENTS

\section*{Acknowledgements}

The authors thank the organizers of the \emph{22nd Coast Combinatorics Conference}, Kailua-Kona, HI, February 2015, where they first learned of the problem of constructing suitable sets of permutations.

%\bibliographystyle{plain}
%\bibliography{suitable}

\end{document}